\documentclass[reqno]{amsart}
\usepackage{hyperref}
\usepackage{amssymb}

\allowdisplaybreaks

\theoremstyle{plain}
\newtheorem{theorem}{Theorem}[section]

\newtheorem{lemma}[theorem]{Lemma}

\theoremstyle{definition}
\newtheorem{definition}[theorem]{Definition}

\theoremstyle{remark}
\newtheorem{remark}[theorem]{Remark}
\numberwithin{equation}{section}

\begin{document}

\title[Blow-up of solutions]{Blow-up of solutions for a semilinear parabolic 
equation with nonlinear memory and absorption under  nonlinear nonlocal boundary condition}

\author[A. Gladkov]{Alexander Gladkov}
\address{Alexander Gladkov \\ Department of Mechanics and Mathematics
\\ Belarusian State University \\  4  Nezavisimosti Avenue \\ 220030
Minsk, Belarus  }    \email{gladkoval@bsu.by}

\subjclass[2020]{ 35K20, 35K58, 35K61}
\keywords{Parabolic equation, nonlinear memory, nonlocal boundary condition, blow-up, global existence}

\begin{abstract}
In this paper we consider initial boundary value problem for a parabolic equation 
with nonlinear memory and absorption under nonlinear nonlocal boundary condition.
We prove global existence and blow-up of solutions.

\end{abstract}

\maketitle

\section{Introduction}\label{sec1}

In this paper we consider the following parabolic equation 
with nonlinear memory and absorption 
\begin{equation}
    u_t= \Delta u + a \int_0^t u^q (x,\tau) \, d\tau - b u^m,\;x\in\Omega,\;t>0, \label{v:u}
\end{equation}
under nonlinear nonlocal boundary condition
\begin{equation}
\frac{\partial u(x,t)}{\partial\nu}=
\int_{\Omega}{k(x,y,t)u^l(y,t)}\,dy, \; x\in\partial\Omega, \; t > 0, \label{v:g}
\end{equation}
and initial datum
\begin{equation}
    u(x,0)=u_{0}(x),\; x\in\Omega, \label{v:n}
\end{equation}
where $ a, b,\,q, \,m,\,l $ are positive numbers, $\Omega$ is a bounded domain in $\mathbb{R}^N$
for $N\geq1$ with smooth boundary $\partial\Omega$, $\nu$ is unit
outward normal on $\partial\Omega.$

Throughout this paper we suppose that the functions
$k(x,y,t)$ and $u_0(x)$ satisfy the following conditions:
\begin{equation*}
k(x, y, t)\in
C(\partial\Omega\times\overline{\Omega}\times[0,+\infty)),\;k(x,y,t) \geq 0;
\end{equation*}
\begin{equation*}
u_0(x)\in C^1(\overline{\Omega}),\;u_0(x)\geq0\textrm{ in
}\Omega,\;\frac{\partial u_0(x)}{\partial\nu}=\int_{\Omega}{k(x,
y,0)u_0^l(y)}\,dy\textrm{ on }\partial\Omega.
\end{equation*}
Various phenomena in the natural sciences and engineering lead to the nonclassical mathematical models subject to nonlocal boundary conditions.
For global existence and blow-up of solutions for parabolic equations and systems
with nonlocal boundary conditions we refer to \cite{F} --  \cite{KD} and the references therein.
In particular, the blow-up problem for parabolic equations with nonlocal boundary condition
\begin{equation}\label{D}
    u(x,t)=\int_{\Omega}k(x,y,t)u^l(y,t)\,dy,\;x\in\partial\Omega,\;t>0,
\end{equation}
was considered in~\cite{GG} --  \cite{GK4}. Blow-up problem for (\ref{v:u}), (\ref{D}), (\ref{v:n}) was studied in~\cite{LMA}. 
Initial boundary value problems for parabolic equations with
nonlocal boundary condition (\ref{v:g}) were studied in~\cite{GK1} --  \cite{G3}.
So, the problem~(\ref{v:u})--(\ref{v:n}) with $a = 0$ was investigated in~\cite{G2,G1}.
Initial-boundary value problems with nonlinear memory for parabolic equations
 were addressed in many papers also (see, for example, \cite{LX} --  \cite{BSU}). 
 Local existence of solutions and comparison principle for~(\ref{v:u})--(\ref{v:n}) have been proved in \cite{BSU}. 

The aim of this paper is to investigate global existence and blow-up of solutions of~(\ref{v:u})--(\ref{v:n}).

This paper is organized as follows. In the next section we present finite time blow-up results for solutions with nontrivial initial data 
and with large initial data. In Section 3 we prove the global existence of solutions for any initial data.


\section{Blow-up in finite time}\label{blow}

 We begin with definition of a supersolution, a subsolution and a solution of~(\ref{v:u})--(\ref{v:n}). Let
$Q_T=\Omega\times(0,T),\;S_T=\partial\Omega\times(0,T)$,
$\Gamma_T=S_T\cup\overline\Omega\times\{0\}$, $T>0$.
\begin{definition}\label{v:sup}
We say that a nonnegative function $u(x,t)\in C^{2,1}(Q_T)\cap
C^{1,0}(Q_T\cup\Gamma_T)$ is a supersolution
of~(\ref{v:u})--(\ref{v:n}) in $Q_{T}$ if
        \begin{equation}\label{v:sup^u}
u_t \geq \Delta u + a \int_0^t u^q (x,\tau) \, d\tau - b u^m,\;(x,t)\in Q_T,
        \end{equation}
        \begin{equation}\label{v:sup^g}
\frac{\partial u(x,t)}{\partial\nu}\geq\int_{\Omega}{k(x, y,
t)u^l(y, t) }\,dy, \; x \in \partial \Omega,\; 0 < t < T,
        \end{equation}
        \begin{equation}\label{v:sup^n}
            u(x,0)\geq u_{0}(x),\; x\in\Omega,
        \end{equation}
and $u(x,t)\in C^{2,1}(Q_T)\cap C^{1,0}(Q_T\cup\Gamma_T)$ is a
subsolution of~(\ref{v:u})--(\ref{v:n}) in $Q_{T}$ if $u\geq0$ and
it satisfies~(\ref{v:sup^u})--(\ref{v:sup^n}) in the reverse
order. We say that $u(x,t)$ is a solution of
problem~(\ref{v:u})--(\ref{v:n}) in $Q_T$ if $u(x,t)$ is both a
subsolution and a supersolution of~(\ref{v:u})--(\ref{v:n}) in
$Q_{T}$.
\end{definition}

To prove the main results we use the comparison principle and the positiveness of a solution
which have been proved in~\cite{BSU}.

\begin{theorem}\label{Th1} Let $\overline{u}$ and $\underline{u}$ be a
 supersolution and a  subsolution of problem
(\ref{v:u})--(\ref{v:n}) in $Q_T,$ respectively. Suppose that
$\underline{u}(x,t)> 0$ or $\overline{u}(x,t) > 0$ in ${Q}_T\cup
\Gamma_T$ if $\min (q, l) < 1.$  Then $ \overline{u}(x,t) \geq
\underline{u}(x,t) $ in ${Q}_T\cup \Gamma_T.$
\end{theorem}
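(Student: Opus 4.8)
The plan is to argue by contradiction, introducing the difference $w = \overline{u} - \underline{u}$ and showing that it cannot become negative on any compact time interval $[0,t_0]\subset[0,T)$. First I would linearize: by the mean value theorem there exist bounded measurable functions $c_1(x,t)$, $c_2(x,t)$, $c_3(x,y,t)$, lying between the corresponding values of $\overline{u}$ and $\underline{u}$, such that $u^q(\overline{u})-u^q(\underline{u}) = c_1 w$ (pointwise in $\tau$), $u^m(\overline{u})-u^m(\underline{u}) = c_2 w$, and $\overline{u}^l - \underline{u}^l = c_3 w$ on the boundary. Subtracting the differential and boundary inequalities in Definition \ref{v:sup}, $w$ satisfies
\begin{equation*}
w_t \geq \Delta w + a\int_0^t c_1(x,\tau) w(x,\tau)\,d\tau - b c_2(x,t) w,\quad (x,t)\in Q_{t_0},
\end{equation*}
\begin{equation*}
\frac{\partial w}{\partial\nu} \geq \int_\Omega k(x,y,t) c_3(x,y,t) w(y,t)\,dy,\quad (x,t)\in S_{t_0},
\end{equation*}
with $w(x,0)\ge 0$. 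Here the sign hypothesis enters: if $\min(q,l)<1$ then $u^q$ or $u^l$ fails to be Lipschitz at $0$, so one needs a strictly positive lower bound on $\overline{u}$ or $\underline{u}$ to keep $c_1$, $c_3$ (and likewise for $m$, although $m>0$ alone is not enough either) bounded; on $[0,t_0]$ with $t_0<T$ this positivity gives uniform bounds $|c_i|\le C$, $0\le k c_3 \le C$.

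The core step is then a standard positivity argument for this nonlocal-in-time, nonlocal-in-space linear inequality. I would multiply $w$ by a suitable weight, e.g. set $w = e^{\lambda t} z$ with $\lambda$ large, to absorb the zeroth-order term $-bc_2 w$ and make the effective reaction coefficient favorable; then I would take the negative part $z^- = \max(-z,0)$, multiply the inequality for $z$ by $-z^-$, and integrate over $\Omega$. Integration by parts converts $\int_\Omega (-z^-)\Delta z$ into $\int_\Omega |\nabla z^-|^2$ plus a boundary term $-\int_{\partial\Omega} z^- \,\partial z/\partial\nu$, which the boundary inequality controls by $C\int_{\partial\Omega} z^- \int_\Omega z^-(y,t)\,dy\,dS_x$; the memory term contributes $aC\int_\Omega z^-(x,t)\int_0^t z^-(x,\tau)\,d\tau\,dx$. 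Using the trace inequality $\|z^-\|_{L^2(\partial\Omega)}^2 \le \varepsilon \|\nabla z^-\|_{L^2(\Omega)}^2 + C_\varepsilon\|z^-\|_{L^2(\Omega)}^2$ to absorb the boundary term into the good Dirichlet term, one arrives at a differential inequality of the form
\begin{equation*}
\frac{d}{dt}\int_\Omega (z^-)^2\,dx \le C\int_\Omega (z^-)^2\,dx + C\int_0^t\!\!\int_\Omega (z^-)^2(x,\tau)\,dx\,d\tau,
\end{equation*}
with zero initial value. A Gronwall argument (applied to $\Phi(t) = \int_\Omega (z^-)^2\,dx + \int_0^t\int_\Omega (z^-)^2\,dx\,d\tau$, say) forces $z^-\equiv 0$ on $[0,t_0]$, hence $w\ge 0$ there; since $t_0<T$ was arbitrary and $w$ is continuous up to $t=T$ on $Q_T\cup\Gamma_T$, we get $\overline{u}\ge\underline{u}$ on $Q_T\cup\Gamma_T$.

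The main obstacle is the interplay between the two nonlocal terms and the low-exponent case: one must carry the memory term through the energy estimate as a time-integral remainder (so ordinary Gronwall is replaced by a Gronwall-type lemma for integro-differential inequalities), and one must be careful that the positivity hypothesis is genuinely used to bound $c_1$ and $c_3$ when $q<1$ or $l<1$ — and to handle the subtlety that $u^m$ with $0<m<1$ also needs the same care, so the positivity is invoked there too. A secondary technical point is justifying the integration by parts and the differentiation under the integral sign given only $C^{2,1}(Q_T)\cap C^{1,0}(Q_T\cup\Gamma_T)$ regularity, which is routine by working on $\Omega\times[\delta,t_0]$ and letting $\delta\to 0$, using continuity up to $t=0$.
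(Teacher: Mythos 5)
This theorem is not proved in the paper at all: the author explicitly imports it (together with Theorem~\ref{Th2}) from the reference \cite{BSU}, where the local existence theory and the comparison principle for (\ref{v:u})--(\ref{v:n}) are established. So there is no in-paper argument to measure your proposal against; what follows is an assessment of your proof on its own terms.

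Your energy-method route (linearize the differences of powers, pass to $z=e^{-\lambda t}w$, test with $-z^-$, integrate by parts, absorb the boundary integral via a trace inequality, and close with a Gronwall lemma adapted to the extra memory term $\int_0^t\int_\Omega(z^-)^2$) is a standard and viable way to prove comparison principles for problems with nonlocal boundary flux, and the overall structure is sound. One point needs more care than your write-up suggests: the claim that the mean-value coefficients $c_1,c_2,c_3$ are \emph{bounded} is not correct under the stated one-sided hypothesis. If, say, only $\overline{u}\ge\delta>0$ while $\underline{u}$ may vanish and $q<1$, then on the set $\{w>0\}$ the intermediate point $\theta$ can be arbitrarily close to $0$ and $q\theta^{q-1}$ blows up; symmetrically, if only $\underline{u}\ge\delta$ the coefficient degenerates on $\{w<0\}$. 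What saves the argument is that the energy estimate only ever uses the \emph{one-sided} inequalities such as $\underline{u}^{\,q}-\overline{u}^{\,q}\le C\,w^-$ on $\{w<0\}$ (and the analogous one for $l$ inside the boundary integral), and these do hold in both cases: when $\overline{u}\ge\delta$ the intermediate point on $\{w<0\}$ satisfies $\theta\ge\overline{u}\ge\delta$; when instead $\underline{u}\ge\delta$ one splits according to whether $\overline{u}\ge\delta/2$ or $\underline{u}-\overline{u}\ge\delta/2$ and uses the $L^\infty$ bound of $\underline{u}$ on $\overline{\Omega}\times[0,t_0]$ in the second subcase. You should also note that the absorption term requires no boundedness at all (so no positivity is needed for $m<1$, contrary to your worry): after multiplying by $-z^-$ it becomes $b\,c_2\,z^-z=-b\,c_2\,(z^-)^2\le0$ since $c_2\ge0$, and can simply be dropped. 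With these corrections the argument closes, since $\Phi(t)=\|z^-(t)\|_2^2+\int_0^t\|z^-\|_2^2\,d\tau$ satisfies $\Phi'\le C\Phi$, $\Phi(0)=0$.
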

\begin{theorem}\label{Th2} Let ${u}$ be a solution of problem
(\ref{v:u})--(\ref{v:n}) in $Q_T.$  Let $ u_0(x) \not\equiv 0$ in $\Omega$ and $m \ge 1.$ Then 
$u(x,t)> 0$ in ${Q}_T\cup S_T.$ If $ u_0(x) > 0$ in $\overline\Omega$ and  $m < 1$  then $u(x,t)> 0$ in ${Q}_T\cup \Gamma_T.$
\end{theorem}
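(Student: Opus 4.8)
The plan is to reduce both assertions to the strong maximum principle and Hopf's boundary point lemma. Two structural facts are used throughout: a solution is nonnegative by definition, and the memory term satisfies $a\int_0^t u^q(x,\tau)\,d\tau\ge 0$, so that $u$ obeys the differential inequality $u_t-\Delta u\ge -bu^m$ in $Q_T$, together with $\partial u/\partial\nu=\int_\Omega k(x,y,t)u^l(y,t)\,dy\ge 0$ on $S_T$. Since $u\in C^{1,0}(Q_T\cup\Gamma_T)$, $u$ is continuous, hence bounded, on $\overline\Omega\times[0,T']$ for every $T'<T$; as $Q_T\cup\Gamma_T=\overline\Omega\times[0,T)$, it suffices to prove the claimed positivity on each $\overline\Omega\times[0,T']$.

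First, the case $m\ge 1$. On $\overline\Omega\times[0,T']$ set $c(x,t):=bu(x,t)^{m-1}$ when $m>1$ and $c:=b$ when $m=1$; then $c$ is continuous and bounded and $cu=bu^m$, so $u$ is a classical, bounded-coefficient supersolution of the linear parabolic equation $u_t-\Delta u+c(x,t)u=a\int_0^t u^q\,d\tau\ge 0$ in $Q_{T'}$. If $u$ vanished at some $(x_0,t_0)$ with $x_0\in\Omega$ and $0<t_0\le T'$, then, this being a minimum of the nonnegative function $u$, the strong maximum principle (no sign hypothesis on $c$ is needed for a zero minimum) would force $u\equiv 0$ on $\overline\Omega\times[0,t_0]$, since $\Omega$ is connected, contradicting $u_0\not\equiv 0$; hence $u>0$ in $Q_T$. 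If instead $u(x_0,t_0)=0$ with $x_0\in\partial\Omega$ and $0<t_0<T$, then, since $u>0$ in $Q_T$ and $\partial\Omega$ is smooth, Hopf's parabolic lemma gives $\partial u/\partial\nu(x_0,t_0)<0$, contradicting $\partial u/\partial\nu(x_0,t_0)=\int_\Omega k(x_0,y,t_0)u^l(y,t_0)\,dy\ge 0$. Thus $u>0$ on $Q_T\cup S_T$.

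Next, the case $m<1$ with $u_0>0$ on $\overline\Omega$; here $c=bu^{m-1}$ need not be bounded, so I would instead produce a uniform positive lower bound by a bootstrap exploiting the memory term (and $a>0$). By continuity and compactness there are $\tau_0\in(0,T)$ and $\delta_0>0$ with $u\ge\delta_0$ on $\overline\Omega\times[0,\tau_0]$. Then for $t\ge\tau_0$ one has $\int_0^t u^q\,d\tau\ge\tau_0\delta_0^q=:A>0$, so on $\Omega\times(\tau_0,T)$ the function $u$ is a supersolution of the auxiliary problem $w_t-\Delta w=aA-bw^m$, $\partial w/\partial\nu=0$, $w(\cdot,\tau_0)=\delta_0$. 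Let $z(t)$ solve $z'=aA-bz^m$, $z(\tau_0)=\delta_0$; since $aA,b>0$ and $0<m<1$, the reaction $s\mapsto aA-bs^m$ is strictly decreasing with a single positive zero $z_*=(aA/b)^{1/m}$, so $z$ exists for all $t\ge\tau_0$ and $z(t)\ge\min(\delta_0,z_*)>0$, i.e. $z$ never reaches $0$. As $z$ (a spatially constant function) solves the auxiliary problem, the comparison principle — obtained by the argument proving Theorem~\ref{Th1}, the decreasing reaction requiring no sign hypothesis — yields $u\ge z$ on $\overline\Omega\times[\tau_0,T)$. Combined with $u\ge\delta_0$ on $\overline\Omega\times[0,\tau_0]$, we conclude $u\ge\min(\delta_0,z_*)>0$ on $\overline\Omega\times[0,T)=Q_T\cup\Gamma_T$.

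The delicate point is the case $m<1$: there the absorption $-bu^m$ is not Lipschitz at $0$ and solutions could a priori develop dead cores, so the argument must use $a>0$ in an essential way. Converting $u_t-\Delta u\ge aA-bu^m$, with the genuinely positive constant $A$ coming from the short-time bound and the memory, into comparison with an ODE possessing a strictly positive equilibrium is exactly what rules this out; one must only verify that the comparison principle tolerates the shifted initial time $\tau_0$ and the modified reaction, which is routine. The case $m\ge 1$ is comparatively soft, the only care being the boundedness of $c$, immediate from continuity of $u$ up to times $t<T$.
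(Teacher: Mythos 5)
The paper does not actually prove this statement: Theorem~\ref{Th2} is quoted as a known result, imported from reference \cite{BSU}, so there is no in-paper proof to compare against. Judged on its own, your argument is correct and is the natural one. For $m\ge 1$ the reduction to a linear inequality $u_t-\Delta u+c(x,t)u\ge 0$ with $c=bu^{m-1}$ bounded and continuous on $\overline\Omega\times[0,T']$ is legitimate (for $m>1$ the map $s\mapsto s^{m-1}$ is continuous at $s=0$), the strong minimum principle at a zero minimum is insensitive to the sign of $c$ (replace $c$ by $c^{+}$, absorbing $c^{-}u\ge0$ into the right-hand side), and the Hopf lemma step against $\partial u/\partial\nu\ge 0$ is standard. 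For $m<1$ you correctly identify the real difficulty -- the non-Lipschitz absorption admits extinction if $a=0$ -- and your use of the memory term to produce the constant source $aA$ with $A=\tau_0\delta_0^q>0$, followed by comparison with the ODE $z'=aA-bz^m$ whose positive equilibrium $z_*=(aA/b)^{1/m}$ traps $z$ away from zero, is exactly the right mechanism. The only places where a referee would ask for an extra sentence are: (i) the comparison principle you invoke on $[\tau_0,T)$ is for a reaction $s\mapsto aA-bs^m$ that is merely decreasing, not Lipschitz at $0$; monotone decrease does give comparison without any Lipschitz or sign hypothesis, but this should be stated as the reason rather than delegated to ``the argument proving Theorem~\ref{Th1}'', whose hypotheses (strict positivity of one of the functions when $\min(q,l)<1$) are tailored to the nonlocal boundary term and do not literally cover your auxiliary Neumann problem; and (ii) in the Hopf step one should note that $u$ is not identically $u(x_0,t_0)=0$ in any interior parabolic neighborhood of the boundary point, which follows from the interior positivity already established. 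Neither point is a gap, only a matter of making the justification explicit.
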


To prove blow-up result we need the following statement.
\begin{lemma}\label{positive}
Let ${u}$ be a solution of problem (\ref{v:u})--(\ref{v:n}) in $Q_T$  with $u_0(x) \not\equiv 0.$ Then 
\begin{equation}\label{L}
\int_{ \Omega} u(x,t) \, dx \ge c_1  \; \textrm{ for} \; t \in [0, T],
\end{equation}
where positive constant $c_1$ does not depend on $T.$
\end{lemma}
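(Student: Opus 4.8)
The plan is to bound below the mass $w(t):=\int_{\Omega}u(x,t)\,dx$. Integrating \eqref{v:u} over $\Omega$, using the divergence theorem together with the boundary condition \eqref{v:g} and the nonnegativity of $k$ and $u$, we obtain
\begin{equation*}
w'(t)=\int_{\partial\Omega}\int_{\Omega}k(x,y,t)u^l(y,t)\,dy\,dS_x+a\int_0^t\!\!\int_{\Omega}u^q(x,\tau)\,dx\,d\tau-b\int_{\Omega}u^m(x,t)\,dx\ge a\int_0^t\!\!\int_{\Omega}u^q(x,\tau)\,dx\,d\tau-b\int_{\Omega}u^m(x,t)\,dx,
\end{equation*}
with $w(0)=\int_{\Omega}u_0\,dx>0$ since $u_0\not\equiv0$. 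The nonnegative memory term is the only source that can compensate the absorption, so it must be retained; the purely absorptive estimate $w'\ge -b\int_{\Omega}u^m\,dx$ alone would only give a lower bound on an interval whose length depends on the data but deteriorates as one tries to make the absorption control explicit.

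The first step is a short-time estimate: there exist $t_0>0$ and $\gamma_0>0$ depending only on the data (not on $T$) such that $w(t)\ge\gamma_0$ for $t\in[0,\min(T,t_0)]$. For $m\le1$ this follows at once from $w'(t)\ge -b|\Omega|^{1-m}w(t)^m$ (Hölder's inequality) by integrating this ordinary differential inequality; for $m>1$ one uses instead the local bound $\max_{\overline{Q}_{t_0}}u\le 2\max_{\overline\Omega}u_0$ furnished by the local existence theory of \cite{BSU}, which gives $\int_{\Omega}u^m\,dx\le(2\max_{\overline\Omega}u_0)^{m-1}w(t)$ and hence Gronwall control of $w$ on $[0,t_0]$. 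The second step is to show that the accumulated memory stays bounded below: $\int_0^t\int_{\Omega}u^q(x,\tau)\,dx\,d\tau\ge c_2>0$ for all $t\in[t_0,T]$, with $c_2$ depending only on the data. When $q\ge1$ this is immediate from the short-time estimate and Hölder's inequality, $\int_0^t\int_{\Omega}u^q\,dx\,d\tau\ge|\Omega|^{1-q}\int_0^{t_0}w(\tau)^q\,d\tau\ge|\Omega|^{1-q}\gamma_0^q t_0$. When $q<1$ one cannot bound $\int_{\Omega}u^q\,dx$ below by a function of $w$, so one fixes a ball $B\subset\subset\Omega$ and $\delta_0>0$ with $u_0\ge\delta_0$ on $B$, notes that $u$ is a supersolution of $v_t=\Delta v-bv^m$ in $B$ with $u\ge0=v$ on $\partial B$ (because the memory term is nonnegative), and compares $u$ from below with the corresponding solution $v\ge0$ of $v_t=\Delta v-bv^m$ in $B$, $v=0$ on $\partial B$, $0\le v(\cdot,0)\le u_0$ in $B$; this gives a positive, data-dependent pointwise lower bound for $u$ on $B\times(0,t_0]$ and hence $c_2$.

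Finally, for $t\in[t_0,T]$ the differential inequality reads $w'(t)\ge ac_2-b\int_{\Omega}u^m(x,t)\,dx$. For $m\le1$, Hölder's inequality gives $\int_{\Omega}u^m\,dx\le|\Omega|^{1-m}w(t)^m$, so if $c_1>0$ is chosen small enough (depending only on the data and on $c_2$) then $w'(t)\ge ac_2/2>0$ at every time $t\ge t_0$ at which $w(t)\le c_1$; consequently $w$ can never drop below $c_1$ on $[t_0,T]$, and together with the short-time estimate this yields \eqref{L} with the constant $\min(\gamma_0,c_1)$, independent of $T$. The step I expect to be the main obstacle is the case $m>1$, where $\int_{\Omega}u^m\,dx$ is not controlled by $w(t)$ owing to possible concentration; this is handled by observing that in the regime where $w(t)$ is small, any attempt of $u$ to concentrate is immediately damped by the strong absorption, so that at a first time $t_1\ge t_0$ with $w(t_1)=c_1$ one must have $w'(t_1)>0$, contradicting minimality — equivalently, one may first derive on any fixed time interval an a priori estimate on $\int_{\Omega}u^m\,dx$ from the equation and the short-time bound, and then balance it against $ac_2$.
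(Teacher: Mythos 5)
Your treatment of the case $m\le 1$ is essentially the paper's own argument: integrate the equation over $\Omega$, keep the accumulated memory $a\int_0^{t_0}\int_\Omega u^q\,dx\,d\tau>0$ as a source, bound $\int_\Omega u^m\,dx\le|\Omega|^{1-m}w^m$ by H\"older, and observe that $w$ cannot cross a small threshold from above. That part is fine (your ball-comparison detour for $q<1$ is more than the paper needs, since only strict positivity of $\int_0^{t_0}\int_\Omega u^q$ is required, but it is not wrong).

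The case $m>1$ is a genuine gap, and you have correctly located it but not closed it. For $m>1$ the quantity $\int_\Omega u^m(x,t)\,dx$ is not controlled by $w(t)=\int_\Omega u(x,t)\,dx$; near a blow-up time $u$ can concentrate so that $\int_\Omega u^m\,dx$ is arbitrarily large while $w$ stays moderate, and then $w'(t)\ge ac_2-b\int_\Omega u^m\,dx$ gives no information. Your two proposed fixes do not work: the assertion that ``any attempt of $u$ to concentrate is immediately damped by the strong absorption'' is not an argument, and an a priori bound on $\int_\Omega u^m\,dx$ ``on any fixed time interval'' cannot yield a constant independent of $T$, which is exactly what the lemma demands. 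The paper avoids the integral inequality altogether in this case: for $m\ge 1$ Theorem~\ref{Th2} gives $u>0$ in $Q_T\cup S_T$, hence there is $c_2>0$ with $u(x,t_0)\ge c_2$ and $a\int_0^{t_0}u^q(x,t)\,dt\ge c_2$ pointwise on $\overline\Omega$; one then compares $u$, on $Q_T\setminus\overline{Q_{t_0}}$, with the \emph{constant} subsolution $c_3=\min\{c_2,(c_2/b)^{1/m}\}$ of the shifted problem
\begin{equation*}
v_t=\Delta v+a\int_0^{t_0}u^q(x,\tau)\,d\tau+a\int_{t_0}^t v^q(x,\tau)\,d\tau-bv^m,
\end{equation*}
for which $0\le c_2-bc_3^m$ and the boundary and initial inequalities are immediate. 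This yields the pointwise bound $u\ge c_3$ for all $t>t_0$, which trivially implies \eqref{L} and completely sidesteps the concentration issue. You would need to replace your $m>1$ argument by something of this pointwise-comparison type (or otherwise genuinely control $\int_\Omega u^m\,dx$ from above uniformly in $t$) for the proof to stand.
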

\begin{proof}
Since $u_0(x) \not\equiv 0$ there exists $t_0 > 0$ such that
\begin{equation}\label{L1}
 \int_{\Omega} u(x,t) \, dx > 0   \; \textrm{ for } \; t \in [0, t_0].
\end{equation}
Suppose that $T>t_0$ and $ m \ge 1.$ By Theorem~\ref{Th2} we have
\begin{equation*}\label{L2}
u(x,t) > 0  \; \textrm{ in} \; {Q}_T\cup S_T.
\end{equation*}
Then there exists $c_2 > 0$ such that
\begin{equation*}\label{L3}
u(x,t_0) \ge c_2  \; \textrm{and} \; a \int_{0}^{t_0} u^q(x,t) \, dt \ge c_2   \; \textrm{ for } \; x \in \overline{\Omega}.
\end{equation*}

Let us consider the following problem
 \begin{equation}\label{L4}
v_t = \Delta v + a \int_0^{t_0} u^q (x,\tau) \, d\tau + a \int_{t_0}^t v^q (x,\tau) \, d\tau  - b v^m,\;(x,t) \in Q_T \setminus \overline{Q_{t_0}},
        \end{equation}
        \begin{equation}\label{L5}
\frac{\partial v(x,t)}{\partial\nu} = \int_{\Omega}{k(x, y,
t) v^l(y, t) }\,dy, \; x \in \partial \Omega,\; t_0 < t < T,
        \end{equation}
        \begin{equation}\label{L6}
            v(x,t_0) = u(x,t_0),\; x\in\Omega.
        \end{equation}
Let
\begin{equation*}
c_3 = \min \left\{c_2, \left[ \frac{c_2}{b} \right]^\frac{1}{m} \right\}.
\end{equation*}        
It is easy to see that $u(x,t)$ and $v(x,t) = c_3$ are a solution and a subsolution of problem (\ref{L4})--(\ref{L6}) in $Q_T \setminus \overline{Q_{t_0}},$ respectively. By comparison principle for (\ref{L4})--(\ref{L6}) (see Theorem~3 in~\cite{BSU} for similar problem)
\begin{equation}\label{L7}
u(x,t) \ge c_3  \; \textrm{in} \; Q_T \setminus \overline{Q_{t_0}}.
\end{equation}
From (\ref{L1}), (\ref{L7}) we obtain (\ref{L}).

Now let  $ m < 1.$ 
Integrating (\ref{v:u}) over $\Omega$ and using Green's identity, we obtain 
\begin{equation}\label{L8}
\begin{split}
 \int_\Omega  u_t (x,t) \, dx & =  \int_{\partial \Omega} \int_\Omega k(x,y,t) u^l(y,t) \,dy \,dS_x + a \int_{\Omega} \int_0^t  u^q (x,\tau) \, d\tau \, dx  \\ & -  b  \int_\Omega u^m (x,t) \, dx. 
\end{split}
\end{equation}
Let us introduce auxiliary function
\begin{equation*}
J_1 (t) =  \int_{\Omega} u(x,t) \, dx.
\end{equation*}
Applying to (\ref{L8}) H\"older's inequality, we have
\begin{equation*}
J'_1 (t)  > c_4 - b \vert\Omega\vert^{1-m} J^m_1 (t)  \; \textrm{for} \; t \ge t_0,
\end{equation*}
where
\begin{equation*}
c_4 =  \frac{a}{2} \int_0^{t_0} \int_{\Omega} u^q (x,\tau) \, dx \, d\tau > 0.
\end{equation*}
Let
\begin{equation*}
c_5 = \min \left\{ J_1 (t_0), \left[ \frac{c_4}{b \vert\Omega\vert^{1-m}} \right]^\frac{1}{m}\right\}.
\end{equation*}
Obviously, $J'_1 (t) > 0$ if  $J_1 (t) \le \left[ \frac{c_4}{b \vert\Omega\vert^{1-m}} \right]^{1/m}.$ Therefore
\begin{equation*}
J_1 (t)  \ge c_5  \; \textrm{for} \; t \ge t_0
\end{equation*}
and  (\ref{L})  holds.
\end{proof}


To formulate finite time blow-up result we introduce
\begin{equation*}
\underline k(t) = \inf_{\Omega} \int_{\partial \Omega} k(x,y,t) \, dS_x
\end{equation*}
 and suppose that
\begin{equation}\label{E9}
\underline k(0) > 0
\end{equation}
and for some  $k_1 >0, \,$ $t_1 >0$
\begin{equation}\label{E91}
\int_{\partial \Omega} k(x,y,t) \, dS_x \ge k_1  \; \textrm{ for any} \;  y \;  \in \Omega  \; \textrm{ and} \;t \ge t_1.
\end{equation}

\begin{theorem}\label{blow-up}
Let $q>\max(m,1)$ or $l>\max(m,1)$ and (\ref{E91}) hold. Then solutions of~(\ref{v:u})--(\ref{v:n}) blow up in finite time for any $u_0(x) \not\equiv 0.$ If $l>\max(m,1)$ and (\ref{E9}) holds then solutions of~(\ref{v:u})--(\ref{v:n}) blow up in finite time for large enough initial data.
\end{theorem}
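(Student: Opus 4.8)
The plan is to construct, in each of the three regimes, an explicit subsolution of \eqref{v:u}--\eqref{v:n} that blows up in finite time, and then invoke the comparison principle (Theorem~\ref{Th1}) together with the lower mass bound from Lemma~\ref{positive}. For the case $q>\max(m,1)$ I would look for a spatially homogeneous subsolution $\underline u(x,t)=g(t)$, where $g$ solves the ordinary differential inequality coming from dropping the Laplacian and the boundary flux term: since a constant in $x$ has $\Delta\underline u=0$ and nonnegative normal derivative, it suffices that $g'(t)\le a\int_0^t g^q(\tau)\,d\tau - b g^m(t)$. Because $q>1$ the memory term eventually dominates $bg^m$ (here $q>m$ is used), and a standard ODE argument — comparing with, say, $g(t)=\varepsilon(1+\delta t)^{\beta}$ for suitable $\beta>0$, or directly estimating $G(t)=\int_0^t g$ via $G''\ge (a/2)G^q$ on the region where $g$ is large — shows $g$ blows up in finite time once $g$ has been pushed above some threshold. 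The threshold is reached using Lemma~\ref{positive}: the integral of $u$ stays bounded below, which (after a short waiting time, using positivity from Theorem~\ref{Th2} and the structure already exploited in the lemma's proof) forces a pointwise lower bound $u(x,t_0)\ge c$ on $\overline\Omega$, giving the needed initial data for the ODE subsolution on $[t_0,\infty)$.

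For the two boundary-driven cases $l>\max(m,1)$ I would instead build a subsolution that grows through the flux condition \eqref{v:sup^g}. A convenient ansatz is $\underline u(x,t)=g(t)\varphi(x)$ with $\varphi$ chosen so that $\varphi>0$ on $\overline\Omega$ and the boundary inequality $\partial_\nu(g\varphi)\le g^l\int_\Omega k(x,y,t)\varphi^l(y)\,dy$ can be arranged; more simply, since \eqref{E91} (resp.\ \eqref{E9}) gives a uniform lower bound $\int_{\partial\Omega}k(x,y,t)\,dS_x\ge k_1$ for $t\ge t_1$, one can try $\varphi\equiv1$ on a slightly larger domain, or use the first eigenfunction of $-\Delta$ with the relevant sign of normal derivative so that $-\Delta\varphi=\lambda\varphi$ and $\partial_\nu\varphi\le 0$ is replaced by a controlled positive quantity; the cleanest route is probably the classical device of comparing with a self-similar boundary-blow-up profile for the pure problem $u_t=\Delta u$, $\partial_\nu u=k_1 u^l$, whose blow-up when $l>1$ is known. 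Integrating \eqref{v:u} over $\Omega$ as in \eqref{L8} and using Hölder to bound $\int_\Omega u^l\ge|\Omega|^{1-l}J_1^l$ (valid since we will have $J_1$ large, and $l>1$), together with $-b\int_\Omega u^m\ge -b|\Omega|^{1-m}J_1^m$ and $l>m$, yields $J_1'(t)\ge \tfrac12 k_1|\Omega|^{1-l}J_1^l(t)$ for $t\ge \max(t_1,t_0)$ once $J_1$ exceeds a threshold, and a first-order Bernoulli-type inequality forces $J_1$, hence $u$, to blow up in finite time. For the nontrivial-data case the threshold is again cleared after a waiting time via Lemma~\ref{positive} (here \eqref{E91} is what lets us wait until $t\ge t_1$); for the large-data case \eqref{E9} gives $\underline k(0)>0$ so no waiting is needed and $J_1(0)=\int_\Omega u_0$ large immediately exceeds the threshold.

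The main obstacle I anticipate is handling the absorption term $-bu^m$ uniformly: when $m<1$ the function $s\mapsto bs^m$ is not Lipschitz at $0$ and can dominate small positive data, which is exactly why a bare lower bound on $\int_\Omega u$ is not enough and one must first upgrade to a pointwise lower bound on $\overline\Omega$ (available for $m\ge1$ directly from Theorem~\ref{Th2}, and for $m<1$ from the argument already run in Lemma~\ref{positive}). Once a uniform positive floor is in place at time $t_0$, the condition $q>m$ (resp.\ $l>m$) guarantees the driving nonlinearity beats the absorption for large values, and the ODE/integral comparison closes. A secondary technical point is that in the memory case the relevant comparison ODE is second order in the primitive $G(t)=\int_0^t g$, so one must be slightly careful to start the comparison at $t_0$ with the ``initial memory'' $\int_0^{t_0}u^q$ already bounded below (as in the proof of Lemma~\ref{positive}); I would absorb this by restarting the problem at $t_0$ exactly as done there.
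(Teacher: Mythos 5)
Your overall strategy (integral functionals plus the comparison principle) is in the right family, and your treatment of the large-data case under (\ref{E9}) matches the paper's: there one indeed gets $J_1'\ge k_0J_3-b|\Omega|^{(l-m)/l}J_3^{m/l}$, uses $J_3\ge|\Omega|^{1-l}J_1^l$, and closes with a Bernoulli-type inequality once $J_1(0)$ is large (large enough also to force blow-up before the time $T_0$ on which $\underline k>0$ is guaranteed). But there are two genuine gaps in the other cases. First, for $q>\max(m,1)$ your spatially constant subsolution needs a pointwise floor $u(x,t_0)\ge c$ on $\overline\Omega$. For $m\ge1$ this is available from Theorem~\ref{Th2}, but for $m<1$ Theorem~\ref{Th2} gives positivity only when $u_0>0$ on $\overline\Omega$, and the $m<1$ branch of the proof of Lemma~\ref{positive} produces only the integral bound $\int_\Omega u\,dx\ge c_5$, not a pointwise one --- strong absorption with $m<1$ is compatible with $u$ vanishing on part of $\Omega$. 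So your claim that the pointwise floor is ``available for $m<1$ from the argument already run in Lemma~\ref{positive}'' is not correct. The paper sidesteps this by never leaving the integral level: it derives the system $J_1'\ge aJ_2-b|\Omega|^{(q-m)/q}(J_2')^{m/q}$, $J_2'\ge|\Omega|^{1-q}J_1^q$, $J_1\ge c_1$, and invokes a modification of Souplet's Lemma 5.3, which needs only the integral lower bound.

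Second, and more seriously, in the case $l>\max(m,1)$ with (\ref{E91}) your argument hinges on $J_1$ ``clearing a threshold after a waiting time via Lemma~\ref{positive}.'' Lemma~\ref{positive} only asserts $J_1(t)\ge c_1$ for some constant $c_1$ determined by the solution; it gives no growth, and $c_1$ has no reason whatever to exceed the threshold of order $|\Omega|(2b/k_1)^{1/(l-m)}$ needed to make $k_1J_3-b|\Omega|^{(l-m)/l}J_3^{m/l}\ge\tfrac{k_1}{2}J_3$. This is exactly where the real work of the proof lies: the paper splits according to whether $\int_0^\infty\int_\Omega u^q\,dx\,dt$ is infinite (in which case the memory term itself eventually absorbs the damping, see (\ref{E10a})--(\ref{E10b}), and no threshold on $J_3$ is needed) or finite (in which case it integrates (\ref{L8}) in time and proves $J_1(t)\to\infty$ by playing the linear-in-$t$ growth of the boundary contribution (\ref{E14}) against the contribution of $b\int u^m$, handled for $m\le q$ by H\"older against the finite memory integral as in (\ref{E15}), and for $m>q$ by the pointwise splitting (\ref{E16})). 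Without some such mechanism forcing $J_1$ past the threshold, your differential inequality $J_1'\ge\tfrac{k_1}{2}|\Omega|^{1-l}J_1^l$ is never activated for merely nontrivial data, and the first assertion of the theorem in this case remains unproved.
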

\begin{proof}
Suppose that $q>\max(m,1)$ and $u_0(x) \not\equiv 0.$ Let us introduce auxiliary function
\begin{equation*}
J_2 (t) =  \int_0^t \int_{\Omega} u^q (x,\tau) \,  dx d\tau.
\end{equation*}
Applying H\"older's inequality to the right hand side of (\ref{L8}), we obtain 
\begin{equation}\label{E101}
J_1' (t) \geq a J_2 (t) - b \vert\Omega\vert^{(q-m)/q} \left( J_2' (t) \right)^{m/q}.
\end{equation}
By H\"older's inequality, we have
\begin{equation}\label{E111}
J_2' (t) =  \int_{\Omega} u^q (x,t) \, dx \geq  \vert\Omega\vert^{1-q} J_1^q (t).
\end{equation}
Slightly modifying the proof of Lemma 5.3 in \cite{Souplet} from (\ref{L}), (\ref{E101}), (\ref{E111}) we conclude that there exists $T \in (0, \infty)$ such that
\begin{equation*}
\lim_{t \to T} (J_1 (t) + J_2 (t) ) = \infty,
\end{equation*}
which implies that $u(x,t)$ blows up in a finite time for any nonnegative nontrivial initial datum.

Let  $l>\max(m,1)$ and (\ref{E91}) hold now. Applying H\"older's inequality for $t \ge t_1$ to the right hand side of (\ref{L8}), we obtain 
\begin{equation}\label{E10}
J_1' (t) \geq  a \int_0^t \int_{\Omega}  u^q (x,\tau) \,  dx d\tau + k_1 J_3 (t) - b \vert\Omega\vert^{(l-m)/l}  J_3^{m/l},
\end{equation}
where
\begin{equation*}
J_3 (t) =  \int_{\Omega} u^l (x,t) \,  dx.
\end{equation*}
We note that 
\begin{equation}\label{E10a}
a  \int_0^t \int_{\Omega} u^q (x,\tau) \,  dx d\tau + k_1 J_3 (t) - b \vert\Omega\vert^{(l-m)/l}  J_3^{m/l} \ge \frac{k_1}{2} J_3 (t)
\end{equation}
if
\begin{equation}\label{E10b}
a \int_0^t \int_{\Omega}  u^q (x,\tau) \,  dx d\tau  \ge b^{l/(l-m)} \vert\Omega\vert \left[  \frac{2 }{ k_1 } \right]^\frac{m}{l-m}.
\end{equation}
Then from (\ref{E10}) -- (\ref{E10b}) using H\"older's inequality, we obtain for large values of $t$ 
\begin{equation}\label{E11}
J_1' (t) \geq \frac{k_1}{2}  J_3 (t) \geq \frac{k_1}{2} \vert\Omega\vert^{1-l} J_1^l (t).
\end{equation}
Thus solutions of~(\ref{v:u})--(\ref{v:n}) blow up in finite time for any $u_0(x) \not\equiv 0$ if
\begin{equation*}
a \int_0^\infty \int_{\Omega} u^q (x,\tau) \,  dx d\tau = \infty.
\end{equation*}

We suppose now that 
\begin{equation}\label{E12}
\int_0^\infty \int_{\Omega} u^q (x,\tau) \,  dx d\tau = M < \infty.
\end{equation}
Integrating (\ref{L8}) by $t$, we obtain 
\begin{equation}\label{E13}
\begin{split}
 \int_\Omega  u (x,t) \, dx &=  \int_\Omega  u_0 (x) \, dx + \int_{0}^{t} \int_{\partial \Omega} \int_\Omega k(x,y,\tau) u^l(y,\tau) \,dy \,dS_x \, d\tau \\ & 
+ a \int_{0}^{t} \int_{\Omega} \int_0^\tau  u^q (x,\sigma) \, d\sigma \, dx \, d\tau -  b \int_{0}^{t} \int_\Omega u^m (x,\tau) \, dx \, d\tau. 
\end{split}
\end{equation}
Using H\"older's inequality and Lemma~\ref{positive}, we have
\begin{equation}\label{E14}
\int_{t_1}^{t} \int_\Omega u^l(y,\tau) \,dy \, d\tau \ge [(t - t_1) \vert\Omega\vert]^{-(l-1)} \left[ \int_{t_1}^{t} \int_\Omega u(y,\tau) \,dy \, d\tau \right]^l \ge c_1^l \vert\Omega\vert^{-(l-1)}(t - t_1).
\end{equation}

If $m \leq q$ then applying H\"older's inequality, we get
\begin{equation}\label{E15}
\int_{0}^{t} \int_\Omega u^m(y,\tau) \,dy \, d\tau \le (t \vert\Omega\vert)^{(q-m)/q}  \left\{ \int_{0}^{t} \int_\Omega u^q(y,\tau) \,dy \, d\tau \right\}^\frac{m}{q} \le(t \vert\Omega\vert)^{(q-m)/q}  M^{m/q} .
\end{equation}
Now from (\ref{E91}), (\ref{E13}) -- (\ref{E15}) we conclude that $J_1 (t) \to \infty$ as $t \to \infty.$ Then by H\"older's inequality $J_3 (t) \to \infty$ as $t \to \infty$ and, moreover, (\ref{E10a}), (\ref{E11}) hold for large values of $t.$ Thus solutions of~(\ref{v:u})--(\ref{v:n}) blow up in finite time for any $u_0(x) \not\equiv 0.$

If $m >q$ then it is easy to check that 
\begin{equation}\label{E16}
b u^m (x,t) \le b \left[ \frac{2b}{k_1} \right]^\frac{m-q}{l-m} u^q (x,t) + \frac{k_1}{2} u^l (x,t).  
\end{equation}
Now from (\ref{E91}), (\ref{E12}) -- (\ref{E14}), (\ref{E16}) we conclude $J_1 (t) \to \infty$ as $t \to \infty$ that guarantees blow up in finite time  of solutions of~(\ref{v:u})--(\ref{v:n}) for any $u_0(x) \not\equiv 0.$


Let  $l>\max(m,1)$ and (\ref{E9}) hold now. Then there exists $T_0 >0$ such that $\underline k(t) > 0$ for $ t \in [0, T_0].$ Denote
$$
k_0 = \min_{[0, T_0]} \underline k(t).
$$
Applying H\"older's inequality to (\ref{L8}), we obtain for $t \leq T_0$
\begin{equation}\label{10}
J_1' (t) \geq   k_0 J_3 (t) -  b  \vert\Omega\vert^{(l-m)/l} J_3^{m/l} (t) = J_3 (t) \left[  k_0  -  b  \vert\Omega\vert^{(l-m)/l} J_3^{(m-l)/l} (t) \right]. 
\end{equation}
By H\"older's inequality, we have
\begin{equation}\label{11}
J_3 (t) =  \int_{\Omega} u^l (x,t) \, dx \geq  \vert\Omega\vert^{1-l} J_1^l (t).
\end{equation}
Let
\begin{equation}\label{111}
J_1 (0) \geq \vert\Omega\vert \left\{ \frac{2b}{k_0} \right\}^\frac{1}{l-m}.
\end{equation}
Then
\begin{equation*}
J_3 (0) \geq \vert\Omega\vert \left\{ \frac{2b}{k_0} \right\}^\frac{l}{l-m},
\end{equation*}
and, moreover, 
\begin{equation}\label{12}
k_0  -  b  \vert\Omega\vert^{(l-m)/l} J_3^{(m-l)/l} (t) \geq \frac{k_0 }{2}
\end{equation}
for any $t>0.$ From (\ref{10}) -- (\ref{12})  we obtain
\begin{equation*}
J_1' (t) \geq \frac{k_0  \vert\Omega\vert^{1-l}}{2 } J_1^l (t). 
\end{equation*}
Obviously, $J_1 (t)$ blows up at $t \leq T_0$ if $J_1 (0)$ satisfies (\ref{111}) and 
\begin{equation*}
J_1 (0) \geq \left[ (l-1) \frac{k_0  \vert\Omega\vert^{1-l}}{2 } T_0  \right]^{-\frac{1}{l-1}}.
\end{equation*}
\end{proof}


\section{Global existence}\label{gl}
The proof of a global existence result relies on the continuation principle and the
construction of a supersolution. 

\begin{theorem}\label{global}
Let at least one from the following conditions hold:

a). $\max (q,l) \leq 1;$ 

b). $\max (q,l) > 1$ and $ l < m, \, q \leq m.$

Then every solution of (\ref{v:u})--(\ref{v:n}) is global.
\end{theorem}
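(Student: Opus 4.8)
The strategy is the one announced just before the statement: combine the local existence / continuation property of \cite{BSU} with the comparison principle, Theorem~\ref{Th1}. It is enough to show that for every $T>0$ the solution stays bounded on $\overline{Q_T}$, and for this it suffices to construct, for each fixed $T$, a supersolution $\overline u$ of (\ref{v:u})--(\ref{v:n}) on $Q_T$ that is finite on $\overline{Q_T}$ and bounded below by a positive constant (the lower bound being needed so that Theorem~\ref{Th1} applies when $\min(q,l)<1$). I would look for $\overline u$ in the separated form $\overline u(x,t)=A\,g(t)\,w(x)$, with $A\ge\max\{1,\|u_0\|_{C(\overline\Omega)}\}$ (which secures (\ref{v:sup^n})); $g\in C^1[0,\infty)$, $g\ge1$, a scalar profile to be chosen; and $w\in C^2(\overline\Omega)$, $w\ge1$, a fixed weight whose outward normal derivative on $\partial\Omega$ can be prescribed as large as needed --- built by gluing a boundary-layer cut-off of width $\delta$ to the constant $1$, so that $\partial w/\partial\nu\sim\delta^{-1}$ while $w$ stays bounded. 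Substituting this ansatz, (\ref{v:sup^u}) reduces (after dividing by $w\ge1$ and estimating $w,\Delta w,w^{q-1},w^{m-1}$ by constants) to a scalar integro-differential inequality
\[
g'(t)\ \ge\ c_w\,g(t)+a\,c_q\!\int_0^t g^q(\tau)\,d\tau-b\,g^m(t),\qquad 0\le t\le T,
\]
while (\ref{v:sup^g}) reduces to a pointwise lower bound of the type $\partial w/\partial\nu\ge c_k\,(A g(T))^{\,l-1}$ on $\partial\Omega$; the constants $c_w,c_q,c_k$ depend only on $\Omega$, on the chosen $w$, and on $\max_{\partial\Omega\times\overline\Omega\times[0,T]}k$ (finite for fixed $T$).

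In case (a), $\max(q,l)\le1$, I would take $g(t)=e^{\lambda t}$. Sublinearity does the work: $q\le1$ gives $\int_0^t g^q\le(q\lambda)^{-1}g$, so the memory term is absorbed into the linear one; $l\le1$ and $Ag\ge1$ give $(Ag)^{l-1}\le1$, so the boundary requirement becomes a fixed lower bound on $\partial w/\partial\nu$ independent of $\lambda$, $A$, $t$. One therefore first fixes $w$ to meet that bound, then takes $\lambda$ larger than $\|\Delta w\|_{C(\overline\Omega)}$ plus a fixed constant, then takes $A$; the absorption term is simply discarded, its sign being favorable. This produces the supersolution on every $Q_T$, hence a global solution for arbitrary initial data.

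In case (b), $\max(q,l)>1$, $l<m$, $q\le m$ (so $m>\max\{q,l,1\}$), the absorption $-bu^m$ is the decisive ingredient. If $q<m$ I would take $g\equiv G$ constant: then $g'=0$ and the reduced inequality becomes $bG^m\ge c_w G+a c_q G^q T$, valid for $G$ large because $m>\max\{q,1\}$. If $q=m$ (hence $q=m>1$) I would take $g(t)=e^{\lambda t}$; then the memory term $a c_q\int_0^t g^q$ and the absorption $bg^m$ are both comparable to $g^m$, and $\lambda\ge\max\{c_w,\,a c_q/(mb)\}$ makes the inequality hold. In either sub-case the boundary condition still requires $\partial w/\partial\nu\ge c_k(Ag(T))^{l-1}$: harmless when $l\le1$, but when $l>1$ it forces $w$ to be steep near $\partial\Omega$, which enlarges $\Delta w$ (hence $c_w$), which feeds back into the interior inequality and forces $G$ (or $\lambda$) larger, and so on. Reconciling these competing demands is the heart of the proof: I would tune the layer width $\delta$, the weight $w$ and the constant $G$ (or $\lambda$) against one another and verify that the system of inequalities closes --- and it is here, and only here, that the \emph{strict} inequality $l<m$ enters (in contrast to the non-strict $q\le m$), the absorption exponent having to exceed $l$ strictly in order to leave a margin for dominating both the large second-order term produced by a steep boundary layer and the factor $(Ag)^{l-1}$ in the boundary requirement; I would expect this bookkeeping possibly to require a weight $w$ somewhat more elaborate than a single boundary layer.

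I expect exactly this last point --- matching the nonlinear nonlocal boundary condition in case (b) when $l>1$ without the necessary steepness of $w$ wrecking the interior supersolution inequality --- to be the main obstacle. Everything else is routine: case (a), the reduction to a scalar inequality, and the analysis of that scalar inequality, which is elementary and parallels the ODE comparisons already used in Lemma~\ref{positive} and Theorem~\ref{blow-up}.
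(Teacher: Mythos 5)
Your overall strategy is exactly the paper's: prove boundedness on each $Q_T$ by exhibiting an explicit, strictly positive supersolution and invoking Theorem~\ref{Th1}. Case (a) as you present it is essentially complete and coincides with the paper's argument up to cosmetics: the paper's weight is $1/(c\varphi(x)+1)$ with $\varphi$ the first Dirichlet eigenfunction (so the outward normal derivative on $\partial\Omega$ is $c\,(-\partial\varphi/\partial\nu)>0$ and can be made large via $c$), while yours is a glued boundary layer; the sublinearity bookkeeping ($\int_0^t e^{q\lambda\tau}d\tau\le(q\lambda)^{-1}e^{\lambda t}$ for $q\le1$, and $(\overline u)^l\le\overline u$ for $l\le1$, $\overline u\ge1$) is the same. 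The paper also disposes of the subcase $l\le1<q\le m$ with the same eigenfunction weight, using the absorption to dominate the memory term via $\mu\ge\cdots+a(c+1)^{m-q}/(qb)$; your constant-$G$ and $e^{\lambda t}$ variants for $q<m$ and $q=m$ accomplish the same thing.

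The genuine gap is precisely where you flag it: case (b) with $l>1$. You correctly identify the two competing demands (steepness of $w$ at $\partial\Omega$ to beat the nonlocal flux, versus $\Delta w$ feeding back into the interior inequality) and correctly guess that the strict inequality $l<m$ is what reconciles them, but you stop at ``I would tune the parameters and verify the system closes,'' which is the entire content of this case. The resolution in the paper is a power-law layer in the normal coordinate $s$: $\overline v=\bigl(\bigl[(s+\varepsilon)^{-\gamma}-\omega^{-\gamma}\bigr]_+^{\beta/\gamma}+A\bigr)e^{rt}$ near $\partial\Omega$ and $Ae^{rt}$ elsewhere, with the exponent constrained to the window $\max(1/l,\,2/(m-1))<\beta<2/(l-1)$. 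The three constraints are exactly your heuristic made precise: the absorption $b\overline v^{\,m}\sim(s+\varepsilon)^{-\beta m}$ must dominate $|\Delta\overline v|\sim(s+\varepsilon)^{-(\beta+2)}$, forcing $\beta>2/(m-1)$; the normal derivative $\sim\varepsilon^{-(\beta+1)}$ must dominate the nonlocal integral $\int_\Omega\overline v^{\,l}\,dy\sim\varepsilon^{-(\beta l-1)}$ (computed in normal coordinates, and requiring $\beta l>1$ for the integral estimate), forcing $\beta+1\ge\beta l-1$, i.e.\ $\beta\le 2/(l-1)$. This window is nonempty if and only if $l<m$, which is where hypothesis (b) enters. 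So your scaling analysis does close, and with the correct exponents, but the proof as written does not exhibit the profile or verify nonemptiness of the window, and that verification is the one nontrivial step of the theorem. A secondary point worth making explicit: on the right-hand side of the boundary inequality the quantity $\int_\Omega w^l\,dy$ itself degenerates as the layer steepens (it contributes the $\varepsilon^{-(\beta l-1)}$ above), so it cannot be absorbed into a fixed constant $c_k$ ``depending on the chosen $w$'' before the tuning is done; it must be carried through the competition.
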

\begin{proof}
In order to prove global existence of solutions we construct a
suitable explicit supersolution of~(\ref{v:u})--(\ref{v:n}) in
$Q_T$ for any positive $T.$ Suppose at first that  $\max (q,l) \leq 1.$ 
Since $k(x,y,t)$ is a continuous function, there exists a constant
$K>0$ such that
\begin{equation*}\label{K}
k(x,y,t)\leq K
\end{equation*}
in $\partial\Omega\times Q_T.$ Let $\lambda_1$ be the first
eigenvalue of the following problem
\begin{equation*}\label{EF}
    \begin{cases}
        \Delta\varphi+\lambda\varphi=0,\;x\in\Omega,\\
        \varphi(x)=0,\;x\in\partial\Omega,
    \end{cases}
\end{equation*}
and $\varphi(x)$ be the corresponding eigenfunction with
$\sup\limits_{\Omega}\varphi(x)=1$. It is well known,
$\varphi(x)>0$ in $\Omega$ and $\max\limits_{\partial\Omega}
\partial\varphi(x)/\partial\nu < 0.$

Now, let $\overline u (x,t) $ be defined as 
\begin{equation}\label{U}
\overline u (x,t) = \frac{C\exp (\mu t)}{c \varphi (x) + 1},
\end{equation}
where constants $C,\mu$ and $c$ are chosen to satisfy the
inequalities:
\begin{equation}\label{c}
c\geq \max \left\{ K \int_\Omega \frac{dy}{(\varphi (y) + 1)^l}
\max_{\partial \Omega} \left(-\frac{\partial \varphi}{\partial
\nu} \right)^{-1}, 1 \right\},
\end{equation}
\begin{equation}\label{C}
C \geq \max \{ \sup_\Omega (c \varphi (x) + 1) u_0 (x), 1 \}  
\end{equation}
and 
\begin{equation*}
\mu \geq \lambda_1 + 2 c^2 \sup_\Omega \frac{\vert\nabla \varphi
\vert^2 } {(c \varphi (x) + 1)^2} + a (c+1)^{1-q} + \frac{1}{q}.
\end{equation*}
It is not difficult to check that $\overline u (x,t) $ is a supersolution of~(\ref{v:u})--(\ref{v:n}) in $Q_T.$ Then by Theorem~\ref{Th1} 	
\begin{equation*}
  u(x,t) \leq \overline u (x,t) \,\,\,  \textrm{in} \,\,\,
 \overline{Q}_T.
	\end{equation*}

For $l \leq 1, \, 1 < q \leq m $ the function $\overline u (x,t)$ in  (\ref{U}) is a supersolution of~(\ref{v:u})--(\ref{v:n}) in $Q_T$ if (\ref{c}), (\ref{C}) hold and 
\begin{equation*}
\mu \geq \lambda_1 + 2 c^2 \sup_\Omega \frac{\vert\nabla \varphi
\vert^2 } {(c \varphi (x) + 1)^2} + \frac{ a (c+1)^{m-q}}{qb}.
\end{equation*}

Suppose now that $1 < l < m, \, q \leq m.$ To construct a supersolution we use the change of variables in a
	neighborhood of $\partial \Omega$ as in \cite{CPE}. Let
	$\overline x\in\partial \Omega$ and $\widehat{n}
	(\overline x)$ be the inner unit normal to $\partial \Omega$ at the
	point $\overline x.$ Since $\partial \Omega$ is smooth it is well
	known that there exists $\delta >0$ such that the mapping $\psi
	:\partial \Omega \times [0,\delta] \to \mathbb{R}^n$ given by
	$\psi (\overline x,s)=\overline x +s\widehat{n} (\overline x)$
	defines new coordinates ($\overline x,s)$ in a neighborhood of
	$\partial \Omega$ in $\overline\Omega.$ A straightforward
	computation shows that, in these coordinates, $\Delta$ applied to
	a function $g(\overline x,s)=g(s),$ which is independent of the
	variable $\overline x,$ evaluated at a point $(\overline x,s)$ is
	given by
	\begin{equation}\label{Gl:new-coord}
	\Delta g(\overline x,s)=\frac{\partial^2g}{\partial s^2}(\overline x,s)-\sum_{j=1}^{n-1}\frac{H_j(\overline x)}{1-s
		H_j (\overline x)}\frac{\partial g}{\partial s}(\overline x,s),
	\end{equation}
	where $H_j (\overline x)$ for $j=1,...,n-1,$ denote the principal
	curvatures of $\partial\Omega$ at $\overline x.$ For $0\leq s\leq \delta$
	and small $\delta$  we have
	\begin{equation}\label{Gl:enq4}
	\left\vert\sum_{j=1}^{n-1} \frac{H_j (\overline x)}{1-s H_j (\overline
		x)}\right\vert\leq\overline c.
	\end{equation}
	
	Let $0<\varepsilon<\omega<\min(\delta, 1),\, $ $r > a/(bq), \,$
	$\max(1/l, 2/(m-1)) <\beta< 2/(l-1),\,$  $0<\gamma<\beta/2,\,$ $A\ge\sup_\Omega u_0(x).$
	We modify a supersolution in \cite{GG1}. For points in $Q_{\delta,T}=\partial \Omega \times [0,
	\delta]\times [0,T]$ of coordinates $(\overline x,s,t)$ define
	\begin{equation}\label{Gl:function}
	\overline v (x,t)= \overline v ((\overline x,s),t)= \left(  \left[(s+\varepsilon)^{-\gamma}-\omega^{-\gamma}\right]_+^\frac{\beta}{\gamma} + A \right) \exp (rt),
	\end{equation}
	where $s_+=\max(s,0).$ For points in $\overline{Q_T}\setminus Q_{\delta,T}$
	we set  $ \overline v(x,t)= A \exp (rt).$ We prove that $ \overline v(x,t)$
	is the supersolution of~(\ref{v:u})--(\ref{v:n}) in $Q_T.$
	It is not difficult to check that
	\begin{equation}\label{Gl:enq1}
	\left\vert\frac{\partial \overline v}{\partial s}\right\vert\leq \beta\min\left(\left[ D(s)\right]^\frac{\gamma+1}{\gamma}\left[( s+\varepsilon)^{-\gamma}-\omega^{-\gamma}\right]_+^\frac{\beta+1}{\gamma},\,(s+\varepsilon)^{-(\beta+1)}\right) \exp (rt),
	\end{equation}
	\begin{equation}\label{Gl:enq2}
	\left\vert\frac{\partial^2 \overline v}{\partial s^2}\right\vert\leq \beta (\beta+1)\min\left(\left[D(s)\right]^\frac{2(\gamma+1)}{\gamma}\left[( s+\varepsilon)^{-\gamma}-
	\omega^{-\gamma}\right]_+^\frac{\beta+2}{\gamma},\,( s+\varepsilon)^{-(\beta+2)}\right) \exp (rt),
	\end{equation}
	where
	\begin{equation*}
	D(s)= \frac{( s+\varepsilon)^{-\gamma}}{ (s+\varepsilon)^{-\gamma}-\omega^{-\gamma}}.
	\end{equation*}
	Then $D^\prime(s)>0$ and for any $\overline\varepsilon>0$
	\begin{equation}\label{Gl:enq3}
	1\leq D(s)\leq 1+\overline\varepsilon, \; 0<s\leq{\overline s},
	\end{equation}
	where ${\overline s} = [\overline\varepsilon/(1+\overline\varepsilon)]^{1/\gamma}\omega-\varepsilon,$
	$\varepsilon<[\overline\varepsilon/(1+\overline\varepsilon)]^{1/\gamma}\omega.$ We denote
\begin{equation}\label{Gl}
	Lv \equiv v_t - \Delta v - a \int_0^t v^q (x,\tau) \, d\tau + b v^m.
	\end{equation}
By (\ref{Gl:new-coord})--(\ref{Gl})  we can choose
	$\overline\varepsilon$ small and  $A$ large so that in $Q_{{\overline s},T}$
\begin{align*}
        L\overline v &\geq \left\{ r \left(  \left[(s+\varepsilon)^{-\gamma}-\omega^{-\gamma}\right]_+^\frac{\beta}{\gamma} + A \right) +  b \left( \left[( s+\varepsilon)^{-\gamma}-\omega^{-\gamma}\right]_+^\frac{\beta }{\gamma} + A \right)^m  \exp [r(m-1)t] \right. \\
         &-  \beta(\beta+1)\left[ D(s)\right]^\frac{2(\gamma+1)}{\gamma}\left[( s+\varepsilon)^{-\gamma}-\omega^{-\gamma}\right]_+^\frac{\beta+2}{\gamma} +  \beta \overline c \left[ D(s)\right]^\frac{\gamma+1}{\gamma} \left[( s+\varepsilon)^{-\gamma}-\omega^{-\gamma}\right]_+^\frac{\beta+1}{\gamma}  \\
       &- \left. \frac{a}{rq} \left(\left[( s+\varepsilon)^{-\gamma}-\omega^{-\gamma}\right]_+^\frac{\beta}{\gamma}+A\right)^q \exp [r(q-1)t] \right\} \exp (rt) \geq 0.
      \end{align*}
	
	Let $s\in[{\overline s},\delta].$ From (\ref{Gl:new-coord})--(\ref{Gl:enq2}) we have
	\begin{equation*}
	\vert\Delta \overline v\vert \leq \left\{  \beta(\beta+1)\omega^{-(\beta+2)}\left(\frac{1+\overline\varepsilon}
	{\overline\varepsilon}\right)^\frac{\beta+2}{\gamma} + \beta\overline c\omega^{-(\beta+ 1)}
\left(\frac{1+\overline\varepsilon}{\overline\varepsilon}\right)^\frac{\beta+1}{\gamma} \right\} \exp (rt)
	\end{equation*}
	and  $L\overline v\geq0$ for large values of $A.$ Obviously, in $\overline{Q_T}\setminus Q_{\delta,T}$
	\begin{equation*}
	L\overline v = rA \exp (rt) - \frac{aA^q}{rq} [\exp (rqt) - 1]  + b A^m \exp (rmt) \geq  0
	\end{equation*}
	for  $A \ge 1.$
	
	Now we prove the following inequality
\begin{equation}\label{E:4.6}
\frac{\partial \overline v}{\partial\nu} (\overline x,0,t) \geq
\int_{\Omega} K \overline v^l(\overline x,s,t) \, dy, \quad
(x,t) \in S_T
\end{equation}
for a suitable choice of $\varepsilon.$ To estimate the integral
$I$ in the right hand side of (\ref{E:4.6}) we use the change of
variables in a neighborhood of $\partial\Omega$ as above. Let
	\begin{equation*}
 \overline J= \sup_{0< s< \delta} \int_{\partial\Omega}
\vert J(\overline y,s)\vert \, d\overline y,
	\end{equation*}
where $J(\overline y,s)$ is Jacobian of the change of variables.
Then we have
\begin{align*}
I \leq &  2^{l-1}  K \left\{ \int_{\Omega} \left[ ( s +
\varepsilon)^{-\gamma} - \omega^{-\gamma}\right]_+^\frac{\beta
l}{\gamma} \, dy + A^l \vert\Omega\vert \right\} \exp (rlt) \\
\leq &  2^{l-1}  K  \left\{  \overline J \int_{0}^{\omega -
\varepsilon} \left[ ( s + \varepsilon)^{-\gamma} -
\omega^{-\gamma}\right]^\frac{\beta
l}{\gamma} \, ds + A^l \vert\Omega\vert \right\} \exp (rlt) \\
\leq &  2^{l-1}  K  \left\{ \frac{  \overline J}{\beta l-1} \left[
\varepsilon^{-(\beta l-1)} - \omega^{-(\beta l-1)}\right] +
 A^l \vert\Omega\vert \right\} \exp (rlt).
 \end{align*}
  On the other hand, since
	\begin{equation*}
\frac{\partial \overline v}{\partial\nu} (\overline x,0,t) = -
\frac{\partial \overline v}{\partial s} (\overline x,0,t) =
\beta \varepsilon^{-\gamma -1} \left[ \varepsilon^{-\gamma} -
\omega^{-\gamma}\right]^\frac{\beta-\gamma}{\gamma} \exp (rt),
	\end{equation*}
the inequality (\ref{E:4.6}) holds if $\varepsilon$ is small
enough. At last,
	\begin{equation*}
 u(x,0) \leq \overline v (x,0) \,\,\,  \textrm{in} \,\,\,  \Omega.
	\end{equation*}
Hence, by Theorem~\ref{Th1} we get
	\begin{equation*}
  u(x,t) \leq \overline v (x,t) \,\,\,  \textrm{in} \,\,\,
 \overline{Q}_T.
	\end{equation*}
\end{proof}

\begin{remark}
In the case $q \leq m, \, l > 1, \, l = m$ global existence and blow-up results depend on $b$ and $k(x,y,t).$ 
 \end{remark}

\subsection*{Acknowledgements}
This work is supported by the state program of fundamental research of Belarus
(grant 1.2.03.1).



\begin{thebibliography}{99}

\bibitem{F} A.~Friedman, \textquotedblleft
Monotonic decay of solutions of parabolic equations with nonlocal boundary conditions,
\textquotedblright 	Quart. Appl. Math. \textbf{44} (3), 401--407 (1986).

	\bibitem{D}  K.~Deng, \textquotedblleft
	Comparison principle for some nonlocal problems, \textquotedblright
	Quart. Appl. Math. \textbf{50} (3), 517--522 (1992).
  
  \bibitem{Pao} C.V.~Pao, \textquotedblleft
 Asimptotic behavior of solutions of reaction-diffusion
equations with nonlocal boundary conditions, \textquotedblright 
 J. Comput. Appl. Math. \textbf{88} (1) , 225--238 (1998).

	\bibitem{CL}   S.~Carl,  V.~Lakshmikantham, \textquotedblleft
Generalized quasilinearization method for reaction-diffusion equation under nonlinear and nonlocal flux conditions, \textquotedblright
J. Math. Anal. Appl. \textbf{271} (1), 182--205  (2002).
	
\bibitem{WMX}  Y.~Wang, C.~Mu, Z.~Xiang, \textquotedblleft
   Blowup of solutions to a porous medium equation with nonlocal boundary condition, \textquotedblright
 Appl. Math. Comput. \textbf{192} (2), 579--585 (2007).

\bibitem{MV} V.~Marras, S.~Vernier Piro, \textquotedblleft
  Reaction-diffusion problems under non-local boundary conditions with blow-up solutions, \textquotedblright
  J. Inequal. Appl. \textbf{167}, 11 pp.  (2014).
  
\bibitem{GN1} A.~Gladkov, A.~Nikitin, \textquotedblleft
  On the existence of global solutions of a system of semilinear parabolic equations with nonlinear nonlocal boundary conditions, \textquotedblright
  Diff. Equat. \textbf{52}(4), 467--482 (2016)
  
\bibitem{KT} B.K.~Kakumani, S.K.~Tumuluri, \textquotedblleft
	Asymptotic behavior of the solution of a diffusion equation with nonlocal boundary conditions, \textquotedblright
Discrete Cont. Dyn. B. \textbf{22}, 407--419 (2017). 

\bibitem{GN2} A.~Gladkov, A.~Nikitin, \textquotedblleft
  On global existence of solutions of initial boundary value problem for a system of semilinear parabolic equations with nonlinear nonlocal Neumann boundary conditions, \textquotedblright
  Diff. Equat. \textbf{54} (1), 86--105 (2018).

\bibitem{KD} W.~Kou, J.  Ding, \textquotedblleft
	Blow-up phenomena for p-Laplacian parabolic equations under nonlocal boundary conditions, \textquotedblright
Appl. Anal. \textbf{100} (16), 3350--3365 (2021).


 

\bibitem{GG} Y.~Gao, W.~Gao, \textquotedblleft
Existence and blow-up of solutions for a porous medium equation with nonlocal boundary condition, \textquotedblright
Appl. Anal. \textbf{90} (5), 799--809 (2011).

\bibitem{GG1} A.~Gladkov, M.~Guedda, \textquotedblleft
 Blow-up problem for semilinear heat equation with absorption and a nonlocal boundary condition, \textquotedblright
Nonlinear Anal. \textbf{74} (13), 4573--4580 (2011).

\bibitem{CYZ} Z.~Cui, Z. Yang, R.~Zhang, \textquotedblleft
Blow-up of solutions for nonlinear parabolic equation with nonlocal source and nonlocal boundary condition, \textquotedblright
Appl. Math. Comput. \textbf{224} (1), 1--8(2013).	

\bibitem{LMA} D.~Liu, C.~Mu, I.~Ahmed,  \textquotedblleft
  Blo-wup for a semilinear  parabolic equation with nonlinear memory and nonlocal nonlinear boundary, \textquotedblright
Taiwan. J. Math. \textbf{17} (4), 1353--1370  (2013).
	
	\bibitem{FZ2} Z.B.~Fang, J.~Zhang,  \textquotedblleft
 Global and blow-up solutions for the nonlocal p-Laplacian evolution equation with weighted nonlinear nonlocal boundary condition, \textquotedblright
J. Integral Equat. Appl. \textbf{26}, 171--196 (2014).	

\bibitem{ZY}  J.~Zhou, D.~Yang, \textquotedblleft
  Blowup for a degenerate and singular parabolic equation
with nonlocal source and nonlocal boundary, \textquotedblright
Appl. Math. Comput. \textbf{256} (1), 881--884 (2015).


\bibitem{GK4}  A.~Gladkov, T.~Kavitova, \textquotedblleft
	On the initial-boundary value problem for a nonlocal parabolic equation with nonlocal boundary condition, \textquotedblright
	Math. Methods Appl. Sci. \textbf{43} (1), 5464--5479 (2020).



	\bibitem{GK1}  A.~Gladkov, T.~Kavitova, \textquotedblleft 
	Initial-boundary-value problem for a semilinear parabolic equation with nonlinear nonlocal boundary conditions, \textquotedblright
Ukr. Math. J. \textbf{68} (2), 179--192 (2016).  
	
	\bibitem{GK2} A.~Gladkov, T.~Kavitova, \textquotedblleft 
	Blow-up problem for semilinear heat equation with nonlinear nonlocal boundary condition, \textquotedblright
	Appl. Anal.  \textbf{95} (9), 1974--1988 (2016).   
	
	\bibitem{G2}  A.~Gladkov, \textquotedblleft
Initial boundary value problem for a semilinear parabolic equation with absorption and nonlinear nonlocal boundary condition, \textquotedblright
	Lith. Math. J. \textbf{57} (4), 468--478 (2017). 
 
\bibitem{G1} A. Gladkov, \textquotedblleft
Blow-up problem for semilinear heat equation with nonlinear nonlocal Neumann boundary condition, \textquotedblright
Commun. Pure Appl. Anal. \textbf{16} (6), 2053--2068 (2017). 

\bibitem{LLLW} B.~Liu, H.~Lin,  F.~Li, X.~Wang,  \textquotedblleft
	Blow-up analyses in reaction-diffusion equations with nonlinear nonlocal boundary flux, \textquotedblright
	Z. Angew. Math. Phys. \textbf{70} (106),  (2019). 

\bibitem{LWSL} B.~Liu, G.~Wu, X.~Sun, F.~Li, \textquotedblleft
	Blow-up eatimate in reaction-diffusion equation with nonlinear nonlocal flux and source, \textquotedblright
	Comp. Math. Appl. \textbf{78} (6), 1862--1877 (2019). 

\bibitem{LHZ} H.~Lu, B.~Hu,  Z.~Zhang, \textquotedblleft
	Blowup time estimates for the heat equation with a nonlocal boundary condition, \textquotedblright
Z. Angew. Math. Phys. \textbf{73} (60),  (2022). 

\bibitem{G3} A. Gladkov, \textquotedblleft
Global existence and blow-up of solutions of nonlinear nonlocal parabolic equation with absorption under nonlinear nonlocal boundary condition, \textquotedblright
 Monatsh. Math. \textbf{203} (2), 357--372 (2024). 


\bibitem{LX}  Y.~Li, C.~Xie,  \textquotedblleft
	Blow-up for semilinear parabolic equations with nonlinear
memory, \textquotedblright Z. Angew. Math. Phys. \textbf{55}, 15--27  (2004).

\bibitem{ZML} J.~Zhou, C.~Mu, F.~Lu,  \textquotedblleft
Blow-up and global existence to a degenerate
reaction-diffusion equation with nonlinear memory, \textquotedblright
J. Math. Anal. Appl. \textbf{333} (2), 1138--1152 (2007). 

\bibitem{FZ}  Z.B.~Fang, J. Zhang, \textquotedblleft
Global existence and blow-up properties of solutions for porous medium equation with nonlinear memory and weighted nonlocal boundary condition, \textquotedblright
 Z. Angew. Math. Phys. \textbf{66}, 67--81 (2015).

\bibitem{DW} K.~Deng, Q.~Wang,  \textquotedblleft
	Global existence and blow-up for the fast diffusion equation with a memory boundary condition, \textquotedblright
 Quart. Appl. Math. \textbf{74} (1), 189--199 (2016).
 
\bibitem{AD}  J.R.~Anderson, K. Deng, \textquotedblleft
	Global solvability for a diffusion model with absorption and memory-driven flux at the
boundary, \textquotedblright Z. Angew. Math. Phys. \textbf{71} (50),  (2020).

\bibitem{GG2} A.~Gladkov, M. Guedda, \textquotedblleft
 Global existence of solutions of a semilinear heat equation with nonlinear memory condition, \textquotedblright
Appl. Anal. \textbf{99} (16), 2823--2832 (2020).  

\bibitem{HF} W.~Huo, Z.B. Fang,  \textquotedblleft 
Life span bounds for reaction-diffusion equation with a space-time integral source term, \textquotedblright 
Z. Angew. Math. Phys. \textbf{74} (128), (2023).

\bibitem{BSU}  A.~Gladkov,  \textquotedblleft
Initial boundary value problem with nonlocal boundary condition for a nonlinear parabolic equation with memory, \textquotedblright
J. Belarus. State Univ. Math. and Inform. (2), 17--26  (2023).


	\bibitem{Souplet}   Ph.~Souplet,  \textquotedblleft
Blow-up in nonlocal reaction-diffusion equations, \textquotedblright SIAM J. Math. Anal.  \textbf{29} (6), 1301--1334  (1998)

\bibitem{CPE} C.~Cortazar, M. del Pino, M. Elgueta,  \textquotedblleft
	On the short-time behaviour of the free boundary of a porous medium equation, \textquotedblright
	Duke J. Math. \textbf{87} (1), 133--149 (1997).



			
			



\end{thebibliography}
\end{document}